\documentclass[11pt,a4paper]{article}

\usepackage[T1]{fontenc}
\usepackage{lmodern}
\usepackage{microtype}
\usepackage{amsmath,amssymb,amsthm,mathtools}
\usepackage[margin=1in]{geometry}
\usepackage{enumitem}
\usepackage[hidelinks]{hyperref}

\newtheorem{theorem}{Theorem}[section]
\newtheorem{proposition}[theorem]{Proposition}
\newtheorem{lemma}[theorem]{Lemma}
\newtheorem{corollary}[theorem]{Corollary}
\theoremstyle{definition}

\theoremstyle{remark}

\newcommand{\T}{\mathbb T}
\newcommand{\R}{\mathbb R}
\newcommand{\Z}{\mathbb Z}
\newcommand{\distT}[1]{\left\lVert #1\right\rVert_{\R/\Z}}
\newcommand{\abs}[1]{\left|#1\right|}
\newcommand{\norm}[1]{\left\lVert #1\right\rVert}
\newcommand{\id}{\operatorname{id}}
\newcommand{\DC}{\operatorname{DC}}

\title{Regularity, quantitative deviation, and non-rigidity of a lacunary skew product}
\author{Yinshan Chang\thanks{Address: College of Mathematics, Sichuan University, Chengdu 610065, PR China; Email: ychang@scu.edu.cn}, Jian Wang\thanks{Address: School of Mathematical Sciences and LPMC, Nankai University, Tianjin 300071, PR China; Email: wangjian@nankai.edu.cn} and Junchang Zhou\thanks{Address: School of Mathematical Sciences and LPMC, Nankai University, Tianjin 300071, PR China; Email: 2120240070@mail.nankai.edu.cn}}
\date{}

\begin{document}

\maketitle

\begin{abstract}
Let $\alpha$ be irrational and let $q_j$ be the denominators of its
continued-fraction convergents.  We study the function
\[
 h(x)=\sum_{j\geq1}\frac{\cos(2\pi q_jx)}{q_j}
\]
and the skew product
\[
 f(x,y)=(x+\alpha,y+h(x))\pmod{\Z^2}.
\]
The function $h$ is H\"older continuous of every exponent below one. A Fourier argument shows that $h$ is not Lipschitz. The map $f$ is a toral pseudo-rotation with rotation vector $(\alpha,0)$, but it has neither bounded mean motion nor $C^0$-rigidity. Suppose $\alpha$ satisfies
the Diophantine condition $\DC(\tau)$. Then, $f$ has $(C,1-1/\tau)$-deviation when $\tau>1$; and it
has $(C_\delta,\delta)$-deviation for every $0<\delta<1$, but not for $\delta=0$ when $\tau=1$.
\end{abstract}

\medskip
\noindent\textbf{Keywords.}
Skew product; lacunary Fourier series; H\"older regularity; bounded mean motion; rigidity; Diophantine condition.

\section{Introduction and main result}

Write $\T=\R/\Z$ and
\[
 \distT{t}=\min_{m\in\Z}\abs{t-m}.
\]
Fix an irrational number $\alpha$, and let $p_j/q_j$ be its
continued-fraction convergents.  The object of this paper is the function
\begin{equation}\label{eq:def-h}
 h(x)=\sum_{j=1}^{\infty}\frac{\cos(2\pi q_jx)}{q_j}.
\end{equation}
The associated skew product and its natural lift are
\begin{align}
 f(x,y)&=(x+\alpha,y+h(x))\pmod{\Z^2},\label{eq:def-f}\\
 F(x,y)&=(x+\alpha,y+h(x)).\label{eq:def-F}
\end{align}
Skew products over irrational rotations are classical objects in topological dynamics. The bounded-mean-motion viewpoint for toral maps is developed in \cite{Jager}. For background on the Fourier tools used here, see \cite{Zygmund}.

For $n\geq 1$, set
\begin{equation}\label{eq:birkhoff-sum}
 S_nh(x)=\sum_{r=0}^{n-1}h(x+r\alpha).
\end{equation}
Then
\begin{equation}\label{eq:iterate}
 F^n(x,y)=\bigl(x+n\alpha,y+S_nh(x)\bigr).
\end{equation}

Let $g$ be a torus homeomorphism isotopic to the identity, and let $G$ be a lift.  We
call $g$ a \emph{pseudo-rotation} if there is a vector $\rho(G)\in\R^2$ such that
\[
 \frac{G^n(z)-z}{n}\longrightarrow\rho(G)
 \qquad\text{for every }z\in\R^2.
\]
Here, we do not require $\rho(G)$ to be totally irrational. Define
\begin{equation}\label{eq:deviation-def}
 D_n(g)=\sup_{z\in\R^2}\norm{G^n(z)-z-n\rho(G)}_2
\end{equation}
and
\begin{equation}\label{eq:tail-deviation}
 \Delta_N(g)=\sup_{n\geq N}\sup_{z\in\R^2}
 \norm{\frac{G^n(z)-z}{n}-\rho(G)}_2.
\end{equation}
The map satisfies the \emph{$(C,\delta)$-deviation condition}, where
$0\leq\delta<1$, if
\[
 \Delta_N(g)\leq CN^{\delta-1}\qquad(N\geq1).
\]
Since $\delta-1<0$, this is equivalent to
\[
 D_n(g)\leq Cn^\delta\qquad(n\geq1).
\]
The case $\delta=0$ is precisely \emph{bounded mean motion}.

We equip $\T^2$ with the quotient
Euclidean metric. Finally, $g$ is \emph{$C^0$-rigid} if there is a sequence $n_\ell\to\infty$ for which $g^{n_\ell}\to\id$ uniformly.

For $\tau\geq1$, we write $\alpha\in\DC(\tau)$ if there is
$\gamma>0$ such that for all $k\geq 1$,
\begin{equation}\label{eq:DC}
 \distT{k\alpha}\geq\gamma\abs{k}^{-\tau}.
\end{equation}

Throughout the paper, $C$ denotes a finite positive constant whose value may change from line to line. Under Diophantine conditions, it may depend on $\gamma$ and $\tau$, but never on the iterate $n$.

\begin{theorem}\label{thm:main}
Let $\alpha$ be irrational, define $h$ by \eqref{eq:def-h}, and define
$f$ by \eqref{eq:def-f}.  Then the following statements hold.
\begin{enumerate}[label=\textup{(\roman*)},leftmargin=2.2em]
 \item There is a constant $C_h$ such that, for
 $0<d=\distT{x-y}\leq1/2$,
 \begin{equation}\label{eq:log-lip-main}
  \abs{h(x)-h(y)}\leq C_h d\log\frac{e}{d}.
 \end{equation}
 Hence, $h\in C^{0,\beta}(\T)$ for every $0<\beta<1$. But $h$ is not
 Lipschitz.

 \item The map $f$ is a pseudo-rotation with rotation vector
 $\rho(F)=(\alpha,0)$. It does not have bounded mean motion.

 \item If $\alpha\in\DC(\tau)$ and $\tau>1$, then
 \begin{equation}\label{eq:power-main}
  D_n(f)\leq Cn^{1-1/\tau};
 \end{equation}
 in particular, $f$ has $(C,1-1/\tau)$-deviation.  If
 $\alpha\in\DC(1)$, then
 \begin{equation}\label{eq:log-main}
  D_n(f)\leq C\log(2n).
 \end{equation}
 Consequently, $f$ has $(C_\delta,\delta)$-deviation for every $0<\delta<1$, but it does not have $(C,0)$-deviation.

 \item The map $f$ is not $C^0$-rigid. (No Diophantine condition is needed for this conclusion.)
\end{enumerate}
\end{theorem}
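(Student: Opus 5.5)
The plan is to work throughout with the Fourier expansion and the exact Birkhoff sums. Since $S_n$ of $\cos(2\pi q_j x)$ is a geometric sum, we have
\[
 S_nh(x)=\sum_{j\ge1}\frac{1}{q_j}\,\mathrm{Re}\Bigl(e^{2\pi i q_jx}\frac{e^{2\pi i nq_j\alpha}-1}{e^{2\pi i q_j\alpha}-1}\Bigr),
\]
so that
\[
 \abs{S_nh(x)}\le \sum_j \frac{1}{q_j}\min\Bigl(n,\frac{1}{2\distT{q_j\alpha}}\Bigr).
\]
The key arithmetic inputs are the standard ones: $q_j$ grows at least geometrically (so $q_j\ge c\,\phi^{j}$), and $\frac{1}{2q_{j+1}}\le\distT{q_j\alpha}\le 1/q_{j+1}$.

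\textbf{Part (i).} For $d=\distT{x-y}$, split the series at $J$ with $q_J\le 1/d<q_{J+1}$. For $j\le J$, use $\abs{\cos a-\cos b}\le 2\pi q_j d$, so each term contributes at most $2\pi d$; the number of such terms is $J\le C\log(e/d)$ by lacunarity. The tail $\sum_{j>J}2/q_j$ is bounded by a geometric series, giving $\le C/q_{J+1}\le Cd$. Hölder continuity for every $\beta<1$ follows at once. For non-Lipschitz, argue by contradiction. If $h$ were Lipschitz, then $h'\in L^\infty\subset L^2$ with Fourier coefficients $\pm\pi i$ at $\pm q_j$. Then $\norm{h'}_2^2=\sum_j 2\pi^2=\infty$, a contradiction.

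\textbf{Part (ii).} The bound above gives
\[
 \abs{S_nh}\le \sum_{q_j\le n}\frac{q_{j+1}}{q_j}+\sum_{q_j>n}\frac{n}{q_j}.
\]
The second sum is $O(1)$. The first sum is $o(n)$: each term satisfies $q_{j+1}/q_j\le q_{j+1}/q_j$ with $q_{j+1}\le$ roughly $n\cdot(\text{ratio})$, and more carefully $\sum_{q_j\le n} q_{j+1}/q_j\cdot(1/n)\to0$. This follows because each term with $q_{j+1}\le \sqrt n\,q_j$ is small, and at most one index has $q_j\le n<q_{j+1}$; I expect to need to handle that index using $\min(n,q_{j+1})/q_j$ instead, which gives $\le n/q_j$ with $q_j\to\infty$. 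Hence $S_nh/n\to0$ uniformly and $\rho=(\alpha,0)$.

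For unbounded mean motion, take $x=0$ and $n=q_k$. Then every term with $j<k$ satisfies $\abs{q_j\alpha\, n}$ mod 1 $\le q_j/q_{k+1}$, which is small, so the term $\frac1{q_j}\sum_r\cos(2\pi q_j r\alpha)$ is close to real and of the right sign. A cleaner route is $L^2$:
\[
 \norm{S_nh}_2^2=\sum_j \frac{1}{2q_j^2}\,\frac{\sin^2(\pi nq_j\alpha)}{\sin^2(\pi q_j\alpha)}.
\]
Choose $n=\lfloor q_{k+1}/(2q_j)\rfloor$-type values. Better still, show $\sup_n\norm{S_nh}_2=\infty$ by averaging over $n\le N$: the average of $\sin^2$ is about $1/2$ once $N\gg q_{j+1}$, so
\[
 \frac1N\sum_{n\le N}\norm{S_nh}_2^2\gtrsim\sum_{q_{j+1}\le N}\frac{q_{j+1}^2}{q_j^2}\ge \#\{j:q_{j+1}\le N\}\to\infty.
\]
This averaging step is the crux of unboundedness, and I expect it to work cleanly.

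\textbf{Part (iii).} Under $\DC(\tau)$ we have $q_{j+1}\le Cq_j^{\tau}$. In the first sum, each term is $\le q_{j+1}/q_j\le Cq_j^{\tau-1}$. For the boundary index, use $\min(n,q_{j+1})/q_j\le \min(n,Cq_j^\tau)/q_j\le Cn^{1-1/\tau}$. The remaining terms form a geometric sum dominated by the largest, giving $Cn^{1-1/\tau}$ for $\tau>1$. For $\tau=1$, each term is $O(1)$ and there are $O(\log n)$ of them, giving $C\log(2n)$. The failure of $(C,0)$-deviation is just part (ii).

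\textbf{Part (iv).} Suppose rigidity held, with $n_\ell\alpha\to0$ and $S_{n_\ell}h\to0$ mod 1 uniformly. The function $S_{n_\ell}h$ has zero mean and is continuous, and its image is an interval containing $0$. Uniform smallness mod 1 forces its range into a small neighbourhood of $\Z$, hence of $0$ by connectedness, so $\norm{S_{n_\ell}h}_\infty\to0$. This contradicts a lower bound $\norm{S_nh}_2\ge$ the single Fourier term $\frac{1}{\sqrt2 q_j}\abs{\sin(\pi nq_j\alpha)/\sin(\pi q_j\alpha)}$.

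I expect the main obstacle to be showing this term stays bounded below along any sequence with $n\alpha\to0$. The plan is to pick $j$ depending on $\ell$ with $q_j\le$ about the scale where $\distT{n_\ell q_j\alpha}\approx 1/4$. Such a $j$ exists because $\distT{n_\ell\alpha}$ shrinks but the multipliers $q_j$ grow geometrically, and this yields a term of size $\gtrsim q_{j+1}/q_j\ge1$.
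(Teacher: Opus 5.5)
\textbf{Gap: the choice of $j$ in part (iv).} This is the step you flagged as the main obstacle. Your plan is to find, for each $\ell$, an index $j$ with $\distT{n_\ell q_j\alpha}\approx 1/4$, so that the sine in the numerator is bounded below and the term is $\gtrsim q_{j+1}/q_j$. Such a $j$ need not exist.

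Take $n=q_m$ and write $q_m\alpha=p_m+\epsilon$ with $\abs{\epsilon}=\distT{q_m\alpha}$. Then
\[
\distT{q_mq_k\alpha}\le\min\{q_k\distT{q_m\alpha},\,q_m\distT{q_k\alpha}\}\le q_m/q_{m+1}\le 1/a_{m+1}
\]
for every $k$: use the first bound for $k\le m$ and the second for $k\ge m$. Suppose $\alpha$ has unbounded partial quotients. Then along $n_\ell=q_{m_\ell}$ with $a_{m_\ell+1}\to\infty$, every factor $\sin(\pi n_\ell q_k\alpha)$ tends to $0$ uniformly in $k$. This sequence is the most natural rigidity candidate, since it satisfies $\distT{n_\ell\alpha}\to0$. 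Your heuristic that the $q_j$ grow geometrically fails because $q_j\distT{n\alpha}$ can jump by the factor $a_{j+1}$, skipping the window around $1/4$ and landing near an integer. Part (iv) is claimed with no Diophantine hypothesis, so this case cannot be set aside.

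\textbf{The missing idea} is the opposite, coherent regime: choose $j$ so that $n\distT{q_j\alpha}$ is \emph{small} and the geometric sum has size comparable to $n$. Take $j$ minimal with $q_{j+1}\ge 4n$. Then $q_j<4n$ and $n\distT{q_j\alpha}\le n/q_{j+1}\le 1/4$, so $\abs{\sin(\pi n\theta_j)}/\abs{\sin(\pi\theta_j)}\ge 2n/\pi$. Hence $\abs{\widehat{S_nh}(q_j)}\ge n/(\pi q_j)>1/(4\pi)$ for every large $n$, with no use of $\distT{n_\ell\alpha}\to0$. In the example $n=q_m$ this selects $j=m$, with coefficient about $1/2$. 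Your single-term $L^2$ bound and your connectedness argument then finish (iv) as you intended.

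\textbf{The rest of the proposal is sound.}
\begin{itemize}
\item Your $L^2$ averaging over $n\le N$ is a valid, and more quantitative, alternative to the paper's weak-limit cohomological argument for unbounded mean motion. Restrict to $q_{j+1}\le N/2$ so that the averaged $\sin^2$ is at least $1/4$.
\item The $o(n)$ claim for the rotation vector is cleanest by dominated convergence in $\sum_j q_j^{-1}\min\{1,q_{j+1}/n\}$.
\item In (iii), bound \emph{every} term by $\min\{n,Cq_j^\tau\}/q_j$ and split at $n^{1/\tau}$, as you already do for the boundary index. Bounding the terms with $q_{j+1}\le n$ by $Cq_j^{\tau-1}$ alone only yields $O(n^{\tau-1})$.
\end{itemize}
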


The proof is self-contained apart from standard facts about
continued fractions and Fourier series.

\section{Continued-fraction estimates}

Suppose \(\alpha=[a_0; a_1, a_2, \ldots]\). We shall use
\begin{equation}\label{eq:cf-recurrence}
 q_{j+2}=a_{j+2}q_{j+1}+q_j,
 \qquad a_{j+2}\geq1,
\end{equation}
and
\begin{equation}\label{eq:cf-distance}
 \frac{1}{q_{j+1}+q_j}
 \leq\distT{q_j\alpha}
 \leq\frac{1}{q_{j+1}}.
\end{equation}
These standard continued-fraction estimates can be found, for example,
in \cite{Khinchin}.

\begin{lemma}\label{lem:lacunary}
The denominators satisfy $q_{j+2}\geq2q_j$.  Consequently, there are
constants $C$ and $C_b$, depending only on $b>0$ where indicated, such
that for every $Q\geq2$,
\begin{align}
 \#\{j:q_j\leq Q\}&\leq C\log(2Q),\label{eq:count}\\
 \sum_{q_j>Q}q_j^{-1}&\leq C Q^{-1},\label{eq:tail}\\
 \sum_{q_j\leq Q}q_j^b&\leq C_bQ^b.\label{eq:positive-sum}
\end{align}
\end{lemma}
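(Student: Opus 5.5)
The plan is to derive the doubling inequality from the recurrence \eqref{eq:cf-recurrence} and then read off the three estimates by comparing with geometric sequences. Since $q_{j+1}\geq q_j$ (as $a_{j+1}\geq1$) and $a_{j+2}\geq1$, we have
\[
q_{j+2}=a_{j+2}q_{j+1}+q_j\geq q_{j+1}+q_j\geq 2q_j .
\]
Iterating this along even and odd indices separately gives $q_{j+2m}\geq 2^m q_j$. Taking $q_0=1$ or $q_1\geq1$ as the starting term, this yields the lower bound $q_j\geq 2^{\lfloor j/2\rfloor}$, up to a harmless shift in indexing convention.

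For \eqref{eq:count}, we use the lower bound $q_j\geq 2^{(j-1)/2}$ for all $j$. Then $q_j\leq Q$ forces $j\leq 1+2\log_2 Q$, so the count is at most $C\log(2Q)$ when $Q\geq2$.

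For \eqref{eq:tail} and \eqref{eq:positive-sum}, we split the indices into the even and the odd subsequence. Along each subsequence the terms at least double from one to the next, so each subsequence is dominated by a geometric series.

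For the tail \eqref{eq:tail}, let $j_0$ be the first index in a given parity class with $q_{j_0}>Q$. The later terms in that class satisfy $q_{j_0+2m}\geq 2^m q_{j_0}>2^mQ$. Hence
\[
\sum_m q_{j_0+2m}^{-1}\leq Q^{-1}\sum_{m\ge0}2^{-m}=2Q^{-1},
\]
and adding the two parity classes gives the bound $4Q^{-1}$.

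For \eqref{eq:positive-sum}, let $j_1$ be the last index in a given parity class with $q_{j_1}\leq Q$. Going backwards, $q_{j_1-2m}\leq 2^{-m}q_{j_1}\leq 2^{-m}Q$. Hence the sum over that class is at most
\[
Q^b\sum_{m\ge0}2^{-mb}=\frac{Q^b}{1-2^{-b}},
\]
so we may take $C_b=2/(1-2^{-b})$.

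One subtlety is that $q_0=q_1=1$ can occur when $a_1=1$. A repeated value is still covered by the parity-class argument, since each parity class is strictly at least doubling. The only real care needed is this bookkeeping at the first indices, which depends on whether $j$ starts at $0$ or $1$. None of the steps is a genuine obstacle.
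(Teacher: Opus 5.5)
Your proposal is correct and follows essentially the same route as the paper: you derive $q_{j+2}\geq q_{j+1}+q_j\geq 2q_j$ from the recurrence, then bound each parity subsequence by a geometric series, summed forward from the first denominator exceeding $Q$ and backward from the last one at most $Q$. The only addition is that you make the constants explicit ($4Q^{-1}$ for the tail and $C_b=2/(1-2^{-b})$), where the paper leaves them implicit.
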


\begin{proof}
The recurrence \eqref{eq:cf-recurrence} and the monotonicity of the
denominators give
\[
 q_{j+2}\geq q_{j+1}+q_j\geq2q_j.
\]
Thus the even and odd subsequences each grow by at least a factor of
two.  The counting estimate follows immediately.  Summing a geometric
series forward from the first denominator larger than $Q$ gives
\eqref{eq:tail}; summing backward from the largest denominator at most
$Q$ gives \eqref{eq:positive-sum}.
\end{proof}

In particular, $\sum_jq_j^{-1}<\infty$. Thus, \eqref{eq:def-h} converges uniformly and defines a continuous
one-periodic function.  Termwise integration yields
\begin{equation}\label{eq:mean-zero}
 \int_\T h(x)\,dx=0.
\end{equation}

\section{H\"older regularity and failure of Lipschitz continuity}

The next proposition proves the first part of Theorem~\ref{thm:main}.

\begin{proposition}\label{prop:regularity}
The function $h$ obeys the log-Lipschitz estimate
\eqref{eq:log-lip-main}.  It belongs to every H\"older class
$C^{0,\beta}(\T)$ with $0<\beta<1$, but it is not Lipschitz.
\end{proposition}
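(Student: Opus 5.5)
We prove the log-Lipschitz bound by splitting the series \eqref{eq:def-h} at the scale $Q=1/d$, and we disprove the Lipschitz property by a Fourier-coefficient argument. Fix $x,y$ with $0<d=\distT{x-y}\leq 1/2$ and write
\[
 h(x)-h(y)=\sum_{q_j\leq 1/d}\frac{\cos(2\pi q_jx)-\cos(2\pi q_jy)}{q_j}
 +\sum_{q_j>1/d}\frac{\cos(2\pi q_jx)-\cos(2\pi q_jy)}{q_j}.
\]
For the low-frequency terms we use $\abs{\cos a-\cos b}\leq\abs{a-b}$. Since the cosines are $1$-periodic, we may replace $x-y$ by a representative of size $d$, so each term is at most $2\pi d$. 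By the counting estimate \eqref{eq:count} with $Q=1/d\geq2$, there are at most $C\log(2/d)$ such terms, so the first sum is $O(d\log(e/d))$. For the high-frequency terms we bound each difference by $2$. The tail estimate \eqref{eq:tail} then gives a contribution of at most $C d$. Together these give \eqref{eq:log-lip-main}. The Hölder statement follows because $d\log(e/d)\leq C_\beta d^\beta$ for $d\leq1/2$ (since $d^{1-\beta}\log(e/d)$ is bounded), and because $h$ is bounded, which handles the $\sup$ part of the norm.

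To show that $h$ is not Lipschitz, argue by contradiction. A Lipschitz function on $\T$ is absolutely continuous with $h'\in L^\infty\subset L^2$. Its Fourier coefficients then satisfy $\widehat{h'}(k)=2\pi i k\,\hat h(k)$, and Parseval gives $\sum_k k^2\abs{\hat h(k)}^2<\infty$. The $q_j$ are distinct, except that $q_0=1$ may coincide with $q_1=1$; since the sum starts at $j=1$, there is no overlap here. The exact coefficients are therefore $\hat h(\pm q_j)=1/(2q_j)$, by uniform convergence, which justifies integrating termwise. Hence
\[
 \sum_k k^2\abs{\hat h(k)}^2=\sum_{j\geq1}2\cdot\frac{q_j^2}{4q_j^2}=\infty,
\]
which is a contradiction.

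The main point needing care is that the $q_j$ for $j\geq1$ are strictly increasing, so that no Fourier coefficients merge. The only possible tie among the denominators is $q_0=q_1=1$ (when $a_1=1$), and index $0$ is excluded from the sum. Otherwise the argument is routine. A finer alternative, convolving with the Fejér kernel to compare $h'$ with a partial sum of $\sum_j\sin$-terms of size one, is unnecessary given the $L^2$ argument.
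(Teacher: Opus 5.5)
Your proposal is correct and follows the paper's proof essentially step for step. You use the same split of the series at $Q=1/d$ with the counting and tail estimates of Lemma~\ref{lem:lacunary}, and the same Fourier contradiction built from $\widehat{h'}(k)=2\pi ik\,\widehat h(k)$ and $\widehat h(q_j)=1/(2q_j)$. The only difference is the last step: you conclude via Parseval for $h'\in L^2$, whereas the paper notes $\abs{\widehat{h'}(q_j)}=\pi$ and invokes Riemann--Lebesgue for $h'\in L^1$, which in fact shows slightly more, namely that $h$ is not even absolutely continuous.
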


\begin{proof}
Let $d=\distT{x-y}$.  Choose representatives of $x$ and $y$ whose
distance is $d$; the assertion is trivial when $d=0$.  Since
\[
 \abs{\cos u-\cos v}\leq\min\{2,\abs{u-v}\},
\]
we have
\begin{equation}\label{eq:cos-bound}
 \abs{\cos(2\pi q_jx)-\cos(2\pi q_jy)}
 \leq\min\{2,2\pi q_jd\}.
\end{equation}
Split the defining series at $Q=d^{-1}$.  Lemma~\ref{lem:lacunary}
gives
\begin{align*}
 \abs{h(x)-h(y)}
 &\leq 2\pi d\sum_{q_j\leq d^{-1}}1
       +2\sum_{q_j>d^{-1}}q_j^{-1}\\
 &\leq C d\log\frac{e}{d}+Cd
 \leq C_h d\log\frac{e}{d}.
\end{align*}
For each $0<\beta<1$, the function
$d^{1-\beta}\log(e/d)$ is bounded on $0<d\leq1/2$; hence
$h\in C^{0,\beta}(\T)$.

With the Fourier convention
\[
 \widehat g(k)=\int_\T g(x)e^{-2\pi ikx}\,dx,
\]
uniform convergence of \eqref{eq:def-h} gives
\begin{equation}\label{eq:h-fourier}
 \widehat h(q_j)=\frac{1}{2q_j}.
\end{equation}
Suppose that $h$ were Lipschitz.  A Lipschitz function on the circle
is absolutely continuous and has a weak derivative
$h'\in L^\infty(\T)$.  Integration by parts in the distributional
sense gives
\[
 \widehat{h'}(k)=2\pi ik\widehat h(k).
\]
Then, \eqref{eq:h-fourier} implies
\[
 \abs{\widehat{h'}(q_j)}=\pi
 \qquad(j\geq1).
\]
This contradicts the Riemann--Lebesgue lemma, since the Fourier
coefficients of the $L^1$ function $h'$ must tend to zero.  Therefore
$h$ is not Lipschitz.
\end{proof}

\section{Rotation vector and bounded mean motion}

We first identify the rotation vector.

\begin{lemma}\label{lem:uniform-ergodic}
For every $\varphi\in C(\T)$,
\[
 \frac1n\sum_{r=0}^{n-1}\varphi(x+r\alpha)
 \longrightarrow\int_\T\varphi(t)\,dt
\]
uniformly in $x$.
\end{lemma}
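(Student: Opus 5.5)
The plan is the classical Weyl argument: check the claim on trigonometric polynomials, where the averages can be computed explicitly, and then pass to all of $C(\T)$ by density and a uniform $\varepsilon/3$ estimate.

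\emph{Step 1: characters.} For $\varphi(x)=e^{2\pi ikx}$ with $k=0$ the averages are identically $1=\int_\T\varphi$. For $k\neq0$, summing the geometric series gives
\[
 \frac1n\sum_{r=0}^{n-1}e^{2\pi ik(x+r\alpha)}
 =\frac{e^{2\pi ikx}}{n}\cdot\frac{e^{2\pi ikn\alpha}-1}{e^{2\pi ik\alpha}-1}.
\]
The denominator is nonzero because $\alpha$ is irrational, so $k\alpha\notin\Z$. Hence the modulus is at most $2/\bigl(n\abs{e^{2\pi ik\alpha}-1}\bigr)$. This bound is independent of $x$ and tends to $0=\int_\T\varphi$. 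By linearity the conclusion holds uniformly for every trigonometric polynomial $P$.

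\emph{Step 2: density.} Fix $\varphi\in C(\T)$ and $\varepsilon>0$. By Fej\'er's theorem (or Stone--Weierstrass) there is a trigonometric polynomial $P$ with $\norm{\varphi-P}_\infty<\varepsilon$. The Birkhoff averages of $\varphi-P$ are bounded by $\varepsilon$ in sup norm, and $\abs{\int_\T(\varphi-P)}<\varepsilon$. Combining this with Step 1 applied to $P$ gives
\[
 \sup_x\Bigl|\frac1n\sum_{r=0}^{n-1}\varphi(x+r\alpha)-\int_\T\varphi\Bigr|<3\varepsilon
\]
for all large $n$. Since $\varepsilon$ was arbitrary, the convergence is uniform in $x$.

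There is no real obstacle here. The only point that needs care is that the bound in Step 1 is uniform in $x$, which is immediate because $\abs{e^{2\pi ikx}}=1$.
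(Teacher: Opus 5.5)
Your proof is correct and follows essentially the same route as the paper: the explicit geometric-series computation for nonconstant characters (nonzero denominator since $\alpha$ is irrational), extension to trigonometric polynomials by linearity, and then to all of $C(\T)$ by uniform approximation. Your version just spells out the $x$-independent bound $2/\bigl(n\abs{e^{2\pi ik\alpha}-1}\bigr)$ and the $\varepsilon/3$ step, which the paper leaves implicit.
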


\begin{proof}
For a nonconstant character $e^{2\pi ikx}$, the average equals
\[
 \frac{e^{2\pi ikx}}{n}
 \frac{1-e^{2\pi ink\alpha}}{1-e^{2\pi ik\alpha}},
\]
which tends to zero uniformly because the denominator is nonzero.
The result follows first for trigonometric polynomials and then for
continuous functions by uniform approximation.
\end{proof}

Applying the lemma to $h$ and using \eqref{eq:mean-zero}, we obtain
from \eqref{eq:iterate}
\[
 \frac{F^n(x,y)-(x,y)}{n}
 =\left(\alpha,\frac{S_nh(x)}{n}\right)
 \longrightarrow(\alpha,0)
\]
uniformly.  Hence $f$ is a pseudo-rotation and
\begin{equation}\label{eq:rotation-vector}
 \rho(F)=(\alpha,0).
\end{equation}
In particular,
\begin{equation}\label{eq:Dn-Sn}
 D_n(f)=\norm{S_nh}_\infty.
\end{equation}

\begin{proposition}\label{prop:no-bmm}
For every irrational $\alpha$, the skew product $f$ does not have
bounded mean motion.
\end{proposition}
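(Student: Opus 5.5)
The plan is to argue by contradiction via Parseval, avoiding any appeal to Gottschalk--Hedlund. By \eqref{eq:Dn-Sn}, bounded mean motion means $\norm{S_nh}_\infty\leq M$ for all $n$. In particular $\norm{S_nh}_{L^2(\T)}\leq M$ for all $n$. I will show that a suitable average of $\norm{S_nh}_2^2$ over $n$ is unbounded, because the resonant frequencies $\pm q_j$ contribute a uniformly positive amount each.

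\emph{Fourier coefficients of the Birkhoff sums.} Since $h(\cdot+r\alpha)$ has coefficients $\widehat h(k)e^{2\pi i kr\alpha}$, summing the geometric series gives, for $k\neq0$,
\[
 \widehat{S_nh}(k)=\widehat h(k)\,\frac{1-e^{2\pi i nk\alpha}}{1-e^{2\pi i k\alpha}}.
\]
By \eqref{eq:h-fourier} and the realness and evenness of $h$, we have $\widehat h(\pm q_j)=1/(2q_j)$. Using $\abs{1-e^{2\pi i t}}\leq 2\pi\distT{t}$ and the upper bound in \eqref{eq:cf-distance}, $\abs{1-e^{2\pi i q_j\alpha}}\leq 2\pi/q_{j+1}$. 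Hence
\[
 \abs{\widehat{S_nh}(\pm q_j)}^2\geq \frac{q_{j+1}^2}{16\pi^2q_j^2}\,\abs{1-e^{2\pi i nq_j\alpha}}^2
 \geq \frac{1}{16\pi^2}\abs{1-e^{2\pi i nq_j\alpha}}^2 .
\]
The factor $q_{j+1}/q_j\geq1$ is what makes each resonant term of size at least a constant.

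\emph{Averaging in $n$.} Fix $J$. Parseval gives
\[
 M^2\geq\frac1N\sum_{n=1}^N\norm{S_nh}_2^2\geq \frac{2}{16\pi^2}\sum_{j\leq J}\frac1N\sum_{n=1}^N\abs{1-e^{2\pi i nq_j\alpha}}^2 .
\]
We have $\abs{1-e^{i\theta}}^2=2-2\cos\theta$, and $q_j\alpha\notin\Z$ because $\alpha$ is irrational. So the inner average tends to $2$ as $N\to\infty$, since the average of $\cos(2\pi nq_j\alpha)$ tends to $0$, exactly as in Lemma~\ref{lem:uniform-ergodic}. The distinct $q_j$ give distinct frequencies; the possible coincidence $q_1=q_2=1$ only loses one term. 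Letting $N\to\infty$ therefore yields $M^2\geq cJ$ with $c>0$ independent of $J$. Since $J$ is arbitrary, this is a contradiction.

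The only delicate point is obtaining a lower bound on $\abs{\widehat{S_nh}(q_j)}$ that is uniform in $j$. This is exactly where the choice of weights $1/q_j$ matches the small divisors $\distT{q_j\alpha}\leq 1/q_{j+1}$, and the estimates above handle it. Everything else is routine, and no Diophantine hypothesis on $\alpha$ is used.
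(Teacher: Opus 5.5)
Your proof is correct, but it takes a different route from the paper.

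\textbf{The paper's route.} It assumes $\sup_n\norm{S_nh}_\infty<\infty$ and extracts a weak $L^2$ limit $g$ of the Ces\`aro means $\frac1N\sum_{n\le N}S_nh$. This gives the cohomological equation $h=g-g(\cdot+\alpha)$. The contradiction then comes from the Riemann--Lebesgue lemma, via $\abs{\widehat g(q_j)}\geq 1/(4\pi)$.

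\textbf{Your route.} You never build a transfer function; you apply Parseval to each $S_nh$ and average in $n$. The two arguments are closely linked. Indeed $\frac1N\sum_{n\le N}\abs{\widehat{S_nh}(k)}^2\to 2\abs{\widehat h(k)}^2/\abs{1-e^{2\pi ik\alpha}}^2$, which is exactly $2\abs{\widehat g(k)}^2$ for the formal solution $g$. So your proof is in effect a Bessel-inequality version of the paper's Riemann--Lebesgue argument. Both rest on the same resonance estimate $\abs{\widehat h(q_j)}/\abs{1-e^{2\pi iq_j\alpha}}\geq q_{j+1}/(4\pi q_j)\geq1/(4\pi)$.

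\textbf{What each buys.} The paper's route isolates a reusable structural fact: bounded Birkhoff sums make $h$ an $L^2$ coboundary. The price is a weak-compactness step. Yours uses only finite sums and a single limit in $N$, and it is easily made quantitative. Using the lower bound in \eqref{eq:cf-distance}, one gets $\frac1N\sum_{n\le N}\abs{1-e^{2\pi inq_j\alpha}}^2\geq2-(q_{j+1}+q_j)/N\geq1$ whenever $q_{j+1}\leq N/2$. This yields $\max_{n\le N}\norm{S_nh}_\infty^2\geq(8\pi^2)^{-1}\,\#\{j:q_{j+1}\leq N/2\}$, a growth statement that the soft compactness argument does not supply.
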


\begin{proof}
Suppose instead that
\begin{equation}\label{eq:bounded-sums}
 \sup_{n\geq1}\norm{S_nh}_\infty<\infty.
\end{equation}
Define
\[
 g_N(x)=\frac1N\sum_{n=1}^N S_nh(x).
\]
The sequence $(g_N)$ is bounded in $L^2(\T)$, so a subsequence
converges weakly to some $g\in L^2(\T)$.  The identity
\[
 S_nh(x+\alpha)=S_{n+1}h(x)-h(x)
\]
implies
\begin{equation}\label{eq:approx-cohomology}
 g_N(x+\alpha)-g_N(x)
 =\frac{S_{N+1}h(x)-S_1h(x)}{N}-h(x).
\end{equation}
Under \eqref{eq:bounded-sums}, the right-hand side converges uniformly
to $-h(x)$.  Passing to the weak limit in \eqref{eq:approx-cohomology}
gives the $L^2$ cohomological equation
\begin{equation}\label{eq:cohomology}
 h(x)=g(x)-g(x+\alpha).
\end{equation}
Taking Fourier coefficients and using \eqref{eq:h-fourier},
\[
 \abs{\widehat g(q_j)}
 =\frac{1/(2q_j)}{\abs{1-e^{2\pi iq_j\alpha}}}.
\]
Since $\abs{1-e^{2\pi it}}\leq2\pi\distT{t}$, the convergent estimate
\eqref{eq:cf-distance} yields
\begin{equation}\label{eq:g-lower}
 \abs{\widehat g(q_j)}
 \geq\frac{q_{j+1}}{4\pi q_j}
 \geq\frac{1}{4\pi}.
\end{equation}
But $g\in L^2(\T)\subset L^1(\T)$, so the Riemann--Lebesgue lemma
requires $\widehat g(q_j)\to0$.  This contradiction proves that
\eqref{eq:bounded-sums} is impossible.  By \eqref{eq:Dn-Sn}, bounded
mean motion fails.
\end{proof}

\section{Control of deviations under Diophantine conditions}

For every $t\in\R$ and $n\geq1$, the geometric-series formula gives
\begin{equation}\label{eq:geometric}
 \abs{\sum_{r=0}^{n-1}e^{2\pi irt}}
 \leq\min\left\{n,\frac{C}{\distT{t}}\right\}.
\end{equation}
We now combine this bound with the Diophantine condition.

\begin{proposition}\label{prop:deviation}
Assume that $\alpha\in\DC(\tau)$.
\begin{enumerate}[label=\textup{(\alph*)},leftmargin=2.2em]
 \item If $\tau>1$, then
 \[
  \norm{S_nh}_\infty\leq Cn^{1-1/\tau}.
 \]
 \item If $\tau=1$, then
 \[
  \norm{S_nh}_\infty\leq C\log(2n).
 \]
\end{enumerate}
The constants may depend on $\alpha$ through the Diophantine constant
$\gamma$.
\end{proposition}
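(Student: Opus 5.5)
The Birkhoff sum is computed term by term from the uniformly convergent series \eqref{eq:def-h}, and its norm is bounded by summing absolute values of the individual terms. For each $j$,
\[
 \sum_{r=0}^{n-1}\cos(2\pi q_j(x+r\alpha))=\operatorname{Re}\Bigl(e^{2\pi iq_jx}\sum_{r=0}^{n-1}e^{2\pi irq_j\alpha}\Bigr),
\]
so \eqref{eq:geometric} gives
\[
 \norm{S_nh}_\infty\leq\sum_{j\geq1}\frac1{q_j}\min\Bigl\{n,\frac{C}{\distT{q_j\alpha}}\Bigr\}.
\]
For each $j$ I will use whichever entry of the minimum is better. For small $q_j$ the Diophantine bound \eqref{eq:DC} gives $1/\distT{q_j\alpha}\leq\gamma^{-1}q_j^{\tau}$, so the $j$-th term is at most $Cq_j^{\tau-1}$. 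For large $q_j$ I use the trivial bound $n$, so the $j$-th term is at most $n/q_j$.

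Next I choose the threshold $Q$ that balances the two contributions, namely $Q^{\tau-1}\approx n/Q$, that is $Q=n^{1/\tau}$ (with $Q\geq2$, adjusting constants for small $n$). Then
\[
 \norm{S_nh}_\infty\leq C\sum_{q_j\leq Q}q_j^{\tau-1}+n\sum_{q_j>Q}q_j^{-1}.
\]
\begin{itemize}
 \item The tail is controlled by \eqref{eq:tail} from Lemma~\ref{lem:lacunary}: it is at most $Cn/Q=Cn^{1-1/\tau}$.
 \item When $\tau>1$, the head is controlled by \eqref{eq:positive-sum} with $b=\tau-1>0$: it is at most $CQ^{\tau-1}=Cn^{1-1/\tau}$. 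This proves (a).
 \item When $\tau=1$, each head term is bounded by a constant, so the head is at most $C\,\#\{j:q_j\leq n\}\leq C\log(2n)$ by \eqref{eq:count}. The tail is at most $Cn/n=C$. This proves (b).
\end{itemize}

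Note that \eqref{eq:DC} is applied to $k=q_j$, and that the cosine terms reduce to a single exponential sum, so no separate negative frequencies arise.

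There is no real obstacle here. The one point requiring care is justifying the interchange of the finite Birkhoff sum with the infinite series, which follows from uniform (indeed absolute) convergence. The bound is therefore a pointwise estimate of an absolutely convergent series, and the lacunarity of Lemma~\ref{lem:lacunary} carries all of the summation.
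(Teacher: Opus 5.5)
Your proposal is correct and follows the paper's proof essentially step for step. Both bound the Birkhoff sum termwise by $\sum_j q_j^{-1}\min\{n, C/\distT{q_j\alpha}\}$ via \eqref{eq:geometric}, apply \eqref{eq:DC} at $k=q_j$, split at $Q=n^{1/\tau}$, and control the pieces with \eqref{eq:positive-sum}, \eqref{eq:count} and \eqref{eq:tail}; your remark that small $n$ (where $Q<2$) needs a trivial bound is a minor point the paper leaves implicit.
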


\begin{proof}
By \eqref{eq:geometric} and
\eqref{eq:DC},
\begin{align}
 \abs{S_nh(x)}
 &\leq\sum_{j\geq1}\frac1{q_j}
 \abs{\sum_{r=0}^{n-1}e^{2\pi irq_j\alpha}}\notag\\
 &\leq C_\gamma\sum_{j\geq1}q_j^{-1}
 \min\{n,q_j^\tau\}.
 \label{eq:master}
\end{align}

Suppose first that $\tau>1$, and set $Q=n^{1/\tau}$.  Splitting
\eqref{eq:master} at $Q$ and applying
Lemma~\ref{lem:lacunary} with $b=\tau-1$, we obtain
\begin{align*}
 \norm{S_nh}_\infty
 &\leq C_\gamma\left(
 \sum_{q_j\leq Q}q_j^{\tau-1}
 +n\sum_{q_j>Q}q_j^{-1}\right)\\
 &\leq C\left(Q^{\tau-1}+nQ^{-1}\right)
 \leq Cn^{1-1/\tau}.
\end{align*}

If $\tau=1$, take $Q=n$.  The low-frequency part of
\eqref{eq:master} is now a counting sum, while the high-frequency
part is controlled by \eqref{eq:tail}.  Thus
\[
 \norm{S_nh}_\infty
 \leq C\left(\#\{j:q_j\leq n\}
 +n\sum_{q_j>n}q_j^{-1}\right)
 \leq C\log(2n).
\]
\end{proof}

Equations \eqref{eq:Dn-Sn} and Proposition~\ref{prop:deviation}
prove Theorem~\ref{thm:main}(iii).  The following restatement makes
the dependence on a prescribed deviation exponent explicit.

\begin{corollary}\label{cor:delta}
Let $0<\delta<1$.  If there is $\gamma>0$ such that
\[
 \distT{k\alpha}\geq\gamma
 \abs{k}^{-1/(1-\delta)}
 \qquad(k\neq0),
\]
then $f$ has $(C,\delta)$-deviation.  If $\alpha\in\DC(1)$, the same
conclusion holds for every $0<\delta<1$.
\end{corollary}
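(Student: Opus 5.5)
The first assertion is a direct application of Proposition~\ref{prop:deviation}(a) with $\tau=1/(1-\delta)$. Since $0<\delta<1$, we have $\tau>1$, so the hypothesis says exactly that $\alpha\in\DC(\tau)$. Then $1-1/\tau=1-(1-\delta)=\delta$, and Proposition~\ref{prop:deviation}(a) gives $\norm{S_nh}_\infty\leq Cn^{\delta}$ for all $n\geq1$. By \eqref{eq:Dn-Sn} this reads $D_n(f)\leq Cn^\delta$. As noted after the definition of the deviation condition, this is equivalent to $\Delta_N(f)\leq CN^{\delta-1}$. For completeness I would verify this: for $n\geq N$ we have $D_n(f)/n\leq Cn^{\delta-1}\leq CN^{\delta-1}$, because $\delta-1<0$. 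Hence $f$ has $(C,\delta)$-deviation.

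For the second assertion, assume $\alpha\in\DC(1)$ and fix $0<\delta<1$. I would use Proposition~\ref{prop:deviation}(b), which gives $\norm{S_nh}_\infty\leq C\log(2n)$. The only work is to absorb the logarithm into a power. The function $n\mapsto\log(2n)/n^\delta$ is continuous on $[1,\infty)$ and tends to $0$, so it is bounded there by a constant $K_\delta$. This gives $D_n(f)\leq CK_\delta n^\delta$, and the same tail argument as above yields $(C_\delta,\delta)$-deviation with $C_\delta=CK_\delta$.

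An alternative route for the second assertion is to observe that $\DC(1)\subset\DC(\tau)$ for every $\tau\geq1$, since $\abs{k}^{-1}\geq\abs{k}^{-\tau}$, and then reduce to the first assertion with $\tau=1/(1-\delta)$. I would mention this as a remark, but the logarithmic bound is the cleaner argument.

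There is no real obstacle here. The only points needing care are the bookkeeping $1-1/\tau=\delta$, checking that the constant depends only on $\gamma$ and $\delta$ and not on $n$, and spelling out the equivalence between the two formulations of the deviation condition.
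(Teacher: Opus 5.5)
Your proposal is correct and follows the paper's proof: you apply Proposition~\ref{prop:deviation}(a) with $\tau=1/(1-\delta)$ for the first assertion, and Proposition~\ref{prop:deviation}(b) together with $\log(2n)\leq C_\delta n^\delta$ for the second. Your side remark that $\DC(1)\subset\DC(\tau)$ for every $\tau\geq1$ also gives a valid alternative route to the second assertion.
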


\begin{proof}
For the first assertion, put $\tau=1/(1-\delta)>1$ in
Proposition~\ref{prop:deviation}.  For the second, combine the
logarithmic estimate with $\log(2n)\leq C_\delta n^\delta$.
\end{proof}

\section{Failure of rigidity}

\begin{proposition}\label{prop:no-rigidity}
For every irrational $\alpha$, the skew product $f$ is not
$C^0$-rigid.
\end{proposition}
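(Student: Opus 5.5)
We argue by contradiction and reduce $C^0$-rigidity to a statement about the Birkhoff sums $S_nh$ modulo $1$, which we then rule out with the same Fourier and cohomological mechanism used in Proposition~\ref{prop:no-bmm}.

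\textbf{Reduction.} Suppose $f^{n_\ell}\to\id$ uniformly for some $n_\ell\to\infty$. By \eqref{eq:iterate}, $f^{n}(x,y)=(x+n\alpha,\,y+S_nh(x))$ modulo $\Z^2$. Uniform convergence to the identity therefore forces
\[
 \distT{n_\ell\alpha}\to0
 \qquad\text{and}\qquad
 \sup_x\distT{S_{n_\ell}h(x)}\to0 .
\]
The function $S_{n_\ell}h$ is continuous on $\T$. For large $\ell$ it stays uniformly within $1/4$ of $\Z$, so by connectedness of $\T$ there is a single integer $m_\ell$ with $\norm{S_{n_\ell}h-m_\ell}_\infty\to0$. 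By \eqref{eq:mean-zero}, $\int_\T S_{n_\ell}h=0$, and hence $m_\ell=0$. Consequently $\norm{S_{n_\ell}h}_\infty\to0$.

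\textbf{Contradiction via Fourier coefficients.} Compute the Fourier coefficients of $S_nh$ at $k=q_j$:
\[
 \widehat{S_nh}(q_j)=\frac{1}{2q_j}\sum_{r=0}^{n-1}e^{2\pi i r q_j\alpha}.
\]
Since $|\widehat{S_nh}(q_j)|\le\norm{S_nh}_\infty$, we would need these coefficients to tend to zero along $n_\ell$. Fix $j$ and set $\theta=q_j\alpha$, so the quantity to control is $\frac{1}{2q_j}\bigl|\frac{1-e^{2\pi i n_\ell\theta}}{1-e^{2\pi i\theta}}\bigr|$. The difficulty is that $\distT{n_\ell\alpha}\to0$ also makes $\distT{n_\ell q_j\alpha}\to0$, so each single coefficient does tend to zero. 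Fourier coefficients at a fixed frequency therefore give no contradiction, and frequencies must vary with $\ell$.

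\textbf{Choice of frequency (main obstacle).} For each $\ell$, choose $j$ with $q_j$ roughly comparable to $n_\ell$, for instance $q_j\le n_\ell<q_{j+1}$. I would then try to show that $\bigl|\sum_{r<n_\ell}e^{2\pi i r q_j\alpha}\bigr|\ge c\,q_j$ for some absolute $c>0$. Heuristically, the phases $rq_j\alpha$ advance by at most $\distT{q_j\alpha}\le 1/q_{j+1}$ per step. If $n_\ell\,\distT{q_j\alpha}\le 1/2$, all terms lie in a half-plane and the sum has modulus $\gtrsim n_\ell\ge q_j$. This would make $|\widehat{S_{n_\ell}h}(q_j)|\ge c/2$, contradicting $\norm{S_{n_\ell}h}_\infty\to0$.

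The delicate case is when $n_\ell$ is much larger than $q_{j+1}/2$. There I would instead pick the index $j$ so that $q_{j+1}$ is the first denominator with $q_{j+1}>2n_\ell$. Then $n_\ell\,\distT{q_j\alpha}\le n_\ell/q_{j+1}<1/2$ and the half-plane argument applies. The sum has modulus $\ge c\,n_\ell$, which gives $|\widehat{S_{n_\ell}h}(q_j)|\ge c\,n_\ell/q_j\ge c$ because $q_j\le 2n_\ell$.

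The expected main obstacle is verifying this half-plane lower bound cleanly, namely that $\bigl|\sum_{r<n}e^{2\pi i r\theta}\bigr|\ge \frac{2}{\pi}n$ when $n\distT{\theta}\le 1/2$. It follows from $|\sin(\pi n\theta)/\sin(\pi\theta)|\ge (2n\distT{\theta})/(\pi\distT{\theta})$, using $\sin x\ge 2x/\pi$ on $[0,\pi/2]$. With that bound the contradiction is uniform in $\alpha$, so no Diophantine condition is needed.
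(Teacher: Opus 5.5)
Your proposal is correct and is essentially the paper's proof. It uses the same reduction: connectedness of $\T$ together with $\int_\T S_{n_\ell}h=0$ forces $\norm{S_{n_\ell}h}_\infty\to0$. It then uses the same lower bound on $\widehat{S_nh}(q_j)$, obtained from $\sin(\pi u)\geq 2u$ and $\sin(\pi u)\leq\pi u$, with $j$ chosen minimal so that $q_{j+1}$ exceeds a fixed multiple of $n$.

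The only differences are minor:
\begin{itemize}
\item Your threshold is $2n$ where the paper uses $4n$. This gives $\abs{\widehat{S_nh}(q_j)}\geq n/(\pi q_j)\geq 1/(2\pi)$; your constant bookkeeping (``$\geq c$'') is loose but harmless.
\item Your abandoned first choice $q_j\leq n<q_{j+1}$ can simply be dropped.
\item The remark that $\distT{n_\ell\alpha}\to0$ is not needed and can also be dropped.
\end{itemize}
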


\begin{proof}
Fix a sufficiently large $n$.  Choose $j=j(n)$ minimally so that
\begin{equation}\label{eq:choose-j}
 q_{j+1}\geq4n.
\end{equation}
Minimality gives $q_j<4n$.  Put
$\theta_j=\distT{q_j\alpha}$.  By \eqref{eq:cf-distance},
\[
 \theta_j\leq q_{j+1}^{-1}\leq(4n)^{-1}.
\]
The $q_j$-th Fourier coefficient of the Birkhoff sum is
\begin{equation}\label{eq:Sn-coeff}
 \widehat{S_nh}(q_j)
 =\frac{1}{2q_j}\sum_{r=0}^{n-1}e^{2\pi irq_j\alpha}.
\end{equation}
The geometric-series formula, together with
$\sin(\pi u)\geq2u$ for $0\leq u\leq1/2$ and
$\sin(\pi u)\leq\pi u$, gives
\[
 \abs{\sum_{r=0}^{n-1}e^{2\pi irq_j\alpha}}
 =\frac{\abs{\sin(\pi n\theta_j)}}
 {\abs{\sin(\pi\theta_j)}}
 \geq\frac{2n}{\pi}.
\]
It follows from $q_j<4n$ that
\begin{equation}\label{eq:uniform-lower}
 \norm{S_nh}_\infty
 \geq\abs{\widehat{S_nh}(q_j)}
 \geq\frac{n}{\pi q_j}>\frac{1}{4\pi}.
\end{equation}

Suppose, toward a contradiction, that
$f^{n_\ell}\to\id$ uniformly for some $n_\ell\to\infty$.  By
\eqref{eq:iterate}, the vertical coordinates satisfy
\begin{equation}\label{eq:mod-small}
 \varepsilon_\ell:=\max_{x\in\T}
 \distT{S_{n_\ell}h(x)}\longrightarrow0.
\end{equation}
For large $\ell$, $\varepsilon_\ell<1/4$.  Every $x$ then has a
unique integer $m_\ell(x)$ such that
\[
 \abs{S_{n_\ell}h(x)-m_\ell(x)}\leq\varepsilon_\ell.
\]
The integer-valued function $m_\ell(x)$ is continuous, hence constant
on the connected circle; denote its value by $m_\ell$.  Since
\eqref{eq:mean-zero} gives
$\int_\T S_{n_\ell}h(x)\,dx=0$, we have
\[
 \abs{m_\ell}
 \leq\int_\T\abs{m_\ell-S_{n_\ell}h(x)}\,dx
 \leq\varepsilon_\ell<\frac14.
\]
Thus $m_\ell=0$ and
$\norm{S_{n_\ell}h}_\infty\leq\varepsilon_\ell\to0$, contradicting
\eqref{eq:uniform-lower}.  Hence no rigidity sequence exists.
\end{proof}


\begin{thebibliography}{9}

\bibitem{Jager}
T.~J\"ager,
\newblock Linearization of conservative toral homeomorphisms,
\newblock \emph{Invent. Math.} \textbf{176} (2009), no.~3, 601--616,
\newblock \href{https://doi.org/10.1007/s00222-008-0171-5}
{doi:10.1007/s00222-008-0171-5}.

\bibitem{Khinchin}
A.~Ya. Khinchin,
\newblock \emph{Continued Fractions},
\newblock Dover Publications, Mineola, NY, 1997; reprint of the 1964
English translation.

\bibitem{Zygmund}
A.~Zygmund,
\newblock \emph{Trigonometric Series}, third ed.,
\newblock Cambridge Mathematical Library, Cambridge University Press,
Cambridge, 2003.

\end{thebibliography}
\end{document}